\begin{document}


\title{Lipschitz shadowing implies structural stability}

\author{Sergei Yu. Pilyugin \footnote{$^1$ Faculty of Mathematics and Mechanics, St.\ Petersburg
State University,
                  University av.\ 28,
                  198504, St.\ Petersburg, Russia. sp@sp1196.spb.edu} and Sergey Tikhomirov\footnote{Department of Mathematics, National Taiwan University, No.
1, Section 4, Roosevelt Road, Taipei 106, Taiwan.
sergey.tikhomirov@gmail.com}}

\date{}

\newtheorem{theorem}{Theorem}
\newtheorem{corollary}[theorem]{Corollary}
\newtheorem{proposition}[theorem]{Proposition}
\newtheorem{lemma}[theorem]{Lemma}
\theoremstyle{definition}
\newtheorem{remark}[theorem]{Remark}

\newcommand{\R}{\ensuremath{\mathbb{R}}}
\newcommand{\N}{\ensuremath{\mathbb{N}}}
\newcommand{\Z}{\ensuremath{\mathbb{Z}}}

\newcommand{\ep}{\varepsilon}
\newcommand{\lam}{\lambda}
\newcommand{\tm}{T_{p_k}M}

\newcommand{\LL}{\mathcal{L}}
\newcommand{\ek}{\exp_{p_k}}
\newcommand{\emk}{\exp^{-1}_{p_{k+1}}}
\newcommand{\ekk}{\exp_{p_{k+1}}}
\newcommand{\Mane}{\mbox{Ma$\tilde{\mbox{n}}\acute{\mbox{e}}$}}

\maketitle

\begin{abstract}
We show that Lipschitz shadowing property of a diffeomorphism is
equivalent to structural stability. As a corollary, we show that an
expansive diffeomorphism having Lipschitz shadowing property is
Anosov.
\end{abstract}
\vspace{2pc}
\noindent{\it Keywords}: Lipschitz
shadowing, hyperbolicity, structural stability \maketitle

\section{Introduction}

The theory of shadowing of approximate trajectories
(pseudotrajectories) of dynamical systems is now a well developed
part of the global theory of dynamical systems (see, for example,
the monographs \cite{PilBook, PalmBook}).

This theory is closely related to the classical theory of structural
stability. It is well known that a diffeomorphism has shadowing
property in a neighborhood of a hyberbolic set \cite{Ano, Bow} and a
structurally stable diffeomorphism has shadowing property on the
whole manifold \cite{Rob, Mor, Saw}. Analyzing the proofs of the
first shadowing results by Anosov \cite{Ano} and Bowen \cite{Bow},
it is easy to see that, in a neighborhood of a hyperbolic set, the
shadowing property is Lipschitz (and the same holds in the case of a
structurally stable diffeomorphism, see \cite{PilBook}).

At the same time, it is easy to give an example of a diffeomorphism
that is not structurally stable but has shadowing property (see
\cite{PilVar}, for example).

Thus, structural stability is not equivalent to shadowing.

One of possible approaches in the study of relations between
shadowing and structural stability is the passage to
$C^1$-interiors. At present, it is known that the $C^1$-interior of
the set of diffeomorphisms having shadowing property coincides with
the set of structurally stable diffeomorphisms \cite{Sak}. Later, a
similar result was obtained for orbital shadowing property (see
\cite{PilRodSak} for details).

Here, we are interested in the study of the above-mentioned
relations without the passage to $C^1$-interiors. Let us mention in
this context that Abdenur and Diaz conjectured that a $C^1$-generic
diffeomorphism with the shadowing property is structurally stable;
they have proved this conjecture for so-called tame diffeomorphisms
\cite{AbdDiaz}. Recently, the first author has proved that the
so-called variational shadowing is equivalent to structural
stability \cite{PilVar}.

In this short note, we show that Lipschitz shadowing
property is equivalent to structural stability.

As a corollary, we show that an expansive diffeomorphism having
Lipschitz shadowing property is Anosov. Let us mention that Ombach
\cite{Omb} and Walters \cite{Walt} showed that a diffeomorphism $f$
is Anosov if and only if $f$ has shadowing property and is strongly
expansive (which means that all the diffeomorphisms in a $C^1$-small
neighborhood of $f$ are expansive with the same expansivity
constant).

In addition, let us mention the recent paper \cite{OsipPilTikh},
where it is shown that Lipschitz periodic shadowing is equivalent to
$\Omega$-stability.

\section{Main result}

Let us pass to exact definitions and statements.

Let $f$ be a diffeomorphism of class $C^1$ of an $m$-dimensional closed
smooth manifold $M$ with Riemannian metric $\mbox{dist}$.
Let $Df(x)$ be the differential of $f$ at a point $x$.
For a point $p\in M$, we denote $p_k=f^k(p),k\in{\Z}$.

Denote by $T_xM$ the tangent space of $M$ at a point $x$;
let $|v|,\,v\in T_xM$, be the norm of $v$ generated by
the metric $\mbox{dist}$.

We say that $f$ has {\em shadowing property}
if for any $\epsilon>0$ there exists $d>0$ with the
following property: for any sequence of points $X=\{x_k\in M\}$
such that
$$
\mbox{dist}(x_{k+1},f(x_k))<d,\quad k\in{\Z},
$$
there exists a point $p\in M$ such that
\begin{equation}
\label{2}
\mbox{dist}(x_{k},p_k)<\epsilon,\quad k\in{\Z}
\end{equation}
(if inequalities (\ref{2}) hold, one says that the trajectory
$\{p_k\}$ $\epsilon$-{\em shadows} the $d$-pseudo\-trajectory $X$).

We say that $f$ has {\em Lipschitz  shadowing property} if there
exist constants $\mathcal{L},d_0>0$ with the following property: For
any $d$-pseudotrajectory $X$ as above with $d\leq d_0$ there exists
a point $p\in M$ such that
\begin{equation}
\label{3}
\mbox{dist}(x_{k},p_k)\leq \LL d,\quad k\in{\Z}.
\end{equation}

The main result of this note is the following statement.

\begin{theorem}\label{thm1}
The following two statements are equivalent:
\begin{itemize}
\item[(1)] $f$ has Lipschitz  shadowing property;
\item[(2)] $f$ is structurally stable.
\end{itemize}
\end{theorem}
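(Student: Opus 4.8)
The plan is to prove the two implications separately. The implication $(2)\Rightarrow(1)$ is essentially classical: it is known that a structurally stable diffeomorphism has the shadowing property on the whole manifold, and an inspection of the proofs (carried out in detail in \cite{PilBook}) shows that this shadowing is in fact Lipschitz. I would therefore dispatch this direction by citation and concentrate all the effort on the hard implication $(1)\Rightarrow(2)$.

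For $(1)\Rightarrow(2)$ my strategy is to reduce Lipschitz shadowing to the analytic/linear characterization of structural stability due to \Mane. Recall that, by \Mane's theorem, $f$ is structurally stable if and only if the following condition holds: for every $p\in M$ the sequence of differentials $A_k=Df(p_k):\tm\to T_{p_{k+1}}M$ has the property that, for every bounded sequence $\{w_k\}$ with $w_k\in T_{p_{k+1}}M$, the inhomogeneous variational equation
$$ v_{k+1}=A_k v_k + w_k, \qquad k\in\Z, $$
admits a bounded solution $\{v_k\}$, $v_k\in\tm$. Through the theory of exponential dichotomies (Maizel--Pliss--Palmer), this existence-of-bounded-solutions property is equivalent to the statement that the homogeneous equation has exponential dichotomies on both half-lines $\Z^{+}$ and $\Z^{-}$ and that the corresponding stable and unstable subspaces $S^{+}(p),U^{-}(p)\subset\tm$ span $\tm$; the latter is precisely the infinitesimal form of Axiom A together with strong transversality. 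Thus it suffices to verify \Mane's condition, with constants uniform in $p$.

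The core of the argument is to extract this linear condition from the nonlinear Lipschitz shadowing hypothesis. Fix $p$ and a bounded sequence $\{w_k\}$, normalized so that $|w_k|\le 1$. For a small parameter $d>0$ and a large truncation $N$, I would build a $d$-pseudotrajectory $X=\{x_k\}$ that coincides with $\{p_k\}$ outside $[-N,N]$ and, on that segment, is obtained from $\{p_k\}$ by displacements along the perturbation, using the exponential maps $\ek$; the estimate $\mbox{dist}\big(f(\ek(z)),\ekk(A_k z)\big)=o(|z|)$ shows that the one-step error is of order $d$, so $X$ is a genuine $d$-pseudotrajectory once $d\le d_0$. Applying Lipschitz shadowing produces a true trajectory $\{q_k\}$ with $\mbox{dist}(q_k,p_k)\le\LL d$, and the rescaled deviations $\tfrac1d\,\exp^{-1}_{p_k}(q_k)$ converge, as $d\to 0$ and $N\to\infty$ through a diagonal and compactness argument, to a sequence $\{v_k\}$ that satisfies the inhomogeneous variational equation and is bounded by a constant multiple of $\LL$. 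Since $\LL$ does not depend on $p$, this yields \Mane's condition uniformly in $p$, and structural stability follows.

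I expect the main obstacle to be precisely this passage from the nonlinear to the linearized statement: one must design the pseudotrajectory so that its deviations encode exactly the inhomogeneity $\{w_k\}$, control the nonlinear remainder terms uniformly so that they disappear in the limit $d\to0$, and ensure that the resulting bound on $\{v_k\}$ is governed by the single constant $\LL$. The uniformity in $p$ is the delicate point: orbitwise existence of bounded solutions is strictly weaker than structural stability, and it is the trajectory-independence of $\LL$ --- which forces uniform dichotomy constants together with the uniform spanning condition $S^{+}(p)+U^{-}(p)=\tm$ --- that upgrades the orbitwise information to the global hyperbolic structure required by \Mane's theorem.
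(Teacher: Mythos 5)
Your proposal is correct and follows essentially the same route as the paper: the implication $(2)\Rightarrow(1)$ by citation, and $(1)\Rightarrow(2)$ by combining \Mane's characterization with the Pliss--Palmer theorem on bounded solutions of the inhomogeneous variational equation, the bounded solution being extracted by shadowing a pseudotrajectory built from the perturbation in exponential charts, rescaling by $1/d$, and passing to the limit over truncations. The only minor difference is that the paper does not need uniformity in $p$ (Proposition \ref{Prop1} is a pointwise transversality condition), so the ``delicate point'' you flag is in fact harmless given that $\LL$ is global.
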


\begin{remark}\label{rem1}
Let us recall that a diffeomorphism $f$ is called {\em expansive} if
there exists a positive number $a$ (expansivity constant) such that
if $x,y\in M$ and  $$\mbox{dist}(f^k(x),f^k(y))\leq a$$ for all
$k\in\Z$, then $x=y$. The above theorem has the following corollary.
\end{remark}

\begin{corollary}\label{cor1}
The following two statements are equivalent:
\begin{itemize}
\item[(1)] $f$ is expansive and has Lipschitz  shadowing property;
\item[(2)] $f$ is Anosov.
\end{itemize}
\end{corollary}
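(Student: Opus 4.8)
The plan is to deduce Corollary~\ref{cor1} from Theorem~\ref{thm1} together with the classical structure theory of structurally stable diffeomorphisms, treating the two implications separately. The implication (2)$\Rightarrow$(1) is the routine one. If $f$ is Anosov, then uniform hyperbolicity of $TM$ immediately yields an expansivity constant: two orbits that remain in a thin tube around one another for all time would have their difference contracted both as $k\to+\infty$ and as $k\to-\infty$, forcing the orbits to coincide, so $f$ is expansive. Moreover, an Anosov diffeomorphism is structurally stable, and hence, by Theorem~\ref{thm1}, it has the Lipschitz shadowing property (alternatively, one invokes the classical observation recalled in the introduction that shadowing near a hyperbolic set is automatically Lipschitz).

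The substantive direction is (1)$\Rightarrow$(2). Assume $f$ is expansive with constant $a$ and has Lipschitz shadowing. By Theorem~\ref{thm1}, $f$ is structurally stable, so by \Mane's theorem $f$ satisfies Axiom~A and the strong transversality condition. Writing $\Omega(f)=\Lambda_1\cup\cdots\cup\Lambda_N$ for the spectral decomposition, every $x\in M$ belongs to $W^u(\Lambda_i)\cap W^s(\Lambda_j)$ for unique indices $i,j$; set $E^u(x)=T_xW^u(\Lambda_i)$ and $E^s(x)=T_xW^s(\Lambda_j)$. These subspaces depend continuously on $x$, are $Df$-invariant, and the strong transversality condition gives $E^s(x)+E^u(x)=T_xM$ for every $x\in M$. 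The next step is to show that $f$ is Anosov precisely when this sum is direct at every point, i.e. when $E^s(x)\cap E^u(x)=\{0\}$ for all $x$: in that case the dimensions are locally constant, the splitting $TM=E^s\oplus E^u$ is continuous and invariant, and the contraction/expansion rates already valid on the basic sets propagate to all of $M$ by compactness, yielding uniform hyperbolicity.

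It then remains to exclude the possibility that $E^s(x_0)\cap E^u(x_0)\neq\{0\}$ for some $x_0$, and this is where expansivity enters. A nonzero vector $v$ in this intersection is contracted by $Df^k(x_0)$ both as $k\to+\infty$ (since $v\in E^s(x_0)$) and as $k\to-\infty$ (since $v\in E^u(x_0)$). The argument is that the local intersection $W^s(x_0)\cap W^u(x_0)$ is then a submanifold of positive dimension through $x_0$, so it contains points $y\neq x_0$ arbitrarily close to $x_0$. For such a $y$ one has $\mbox{dist}(f^k(x_0),f^k(y))\to 0$ as $k\to\pm\infty$, and, choosing $y$ close enough to $x_0$, the finitely many intermediate iterates stay within $a$ by continuity; hence $\mbox{dist}(f^k(x_0),f^k(y))\le a$ for all $k\in\Z$ with $x_0\neq y$, contradicting expansivity. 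Therefore the splitting is direct everywhere and $f$ is Anosov.

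The main obstacle I anticipate is the step producing a genuine distinct point $y$ on $W^s(x_0)\cap W^u(x_0)$ out of the mere existence of the common tangent vector $v$: this requires the intersection of the two leaves to be clean and positive-dimensional, together with uniform estimates ensuring that the \emph{entire} bi-infinite orbit of $y$, not just its tails, stays within the expansivity constant. The complementary step, upgrading the pointwise direct splitting $E^s\oplus E^u$ to genuine uniform hyperbolicity over all of $M$, is the other point needing care, although it should follow from standard compactness arguments once the splitting is known to be continuous, invariant, and everywhere transverse.
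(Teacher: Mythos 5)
Your overall skeleton coincides with the paper's: the implication $(2)\Rightarrow(1)$ is classical, and for $(1)\Rightarrow(2)$ you first invoke Theorem~\ref{thm1} to get structural stability and then argue that an expansive structurally stable diffeomorphism is Anosov. The paper stops one step earlier than you do: for that last implication it simply cites \Mane's theorem \cite{Mane1}, whereas you attempt to re-derive it via the spectral decomposition and strong transversality. If you are willing to quote that theorem (as the authors are), your proof is complete and essentially identical to theirs.

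As a self-contained replacement for the citation, however, your sketch has genuine gaps beyond the two you flag. First, the assertion that $E^s(x)=T_xW^s(\Lambda_j)$ and $E^u(x)=T_xW^u(\Lambda_i)$ ``depend continuously on $x$'' is false in general for an Axiom~A diffeomorphism with strong transversality: the indices $i,j$ jump as $x$ crosses basin boundaries, and even the dimensions $\dim E^s(x)$, $\dim E^u(x)$ need not be locally constant (think of a point whose forward orbit limits on one basic set while nearby points limit on another). Continuity of the splitting is exactly one of the things that has to be \emph{proved} once one knows $E^s(x)\cap E^u(x)=\{0\}$ everywhere, and it is not free. Second, the final upgrade from ``transversal direct splitting everywhere, hyperbolic over $\Omega(f)$'' to uniform hyperbolicity on all of $M$ is itself a nontrivial theorem (it is the heart of \Mane's quasi-Anosov analysis), not a routine compactness argument; the phrase ``the contraction/expansion rates propagate to all of $M$ by compactness'' hides the real work. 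Third, producing a point $y\neq x_0$ whose \emph{entire} orbit stays within the expansivity constant $a$ requires local stable and unstable manifolds of uniform size along the orbit of $x_0$ together with the in-phase property, so that closeness for all $k\ge 0$ and all $k\le 0$ (not merely asymptotic closeness) is guaranteed; you correctly identify this as an obstacle but do not resolve it. In short: as a derivation of the corollary from Theorem~\ref{thm1} plus the literature, your argument matches the paper's; as a proof from scratch of the \Mane\ ingredient, it is incomplete at the three points above.
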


\begin{proof}[Proof of the corollary.] The implication $(2)\Rightarrow
(1)$ is well known (see, for example, \cite{PilBook}). By our
theorem, condition (1) of the corollary implies that $f$ is
structurally stable, and it was shown by $\Mane$ that an expansive
structurally stable diffeomorphism is Anosov (see \cite{Mane1}).
\end{proof}

Now we pass to the proof of the main theorem.

The implication $(2)\Rightarrow (1)$ is well known (see, for
example, \cite{PilBook}).

In the proof of the implication $(1)\Rightarrow (2)$, we use the
following two known results (Propositions \ref{Prop1} and
\ref{Prop2}).

First we introduce some notation. For a point $p\in M$,
define the following two subspaces of $T_pM$:
$$
B^+(p)=\{v\in T_pM\,:\,|Df^k(p)v|\to 0,\quad k\to +\infty\}
$$
and
$$
B^-(p)=\{v\in T_pM\,:\,|Df^k(p)v|\to 0,\quad k\to -\infty\}.
$$

\begin{proposition}\label{Prop1}
[\Mane, \cite{Mane2}]. The diffeomorphism $f$ is structurally stable
if and only if
$$
B^+(p)+B^-(p)=T_pM
$$
for any $p\in M$.
\end{proposition}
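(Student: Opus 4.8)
The plan is to connect the pointwise linear condition to the geometric characterization of structural stability by hyperbolicity. I would use as a backbone the theorem that $f$ is structurally stable if and only if it satisfies Axiom A together with the strong transversality condition; here the sufficiency of these two conditions is due to Robbin and Robinson, while the necessity is \Mane's solution of the $C^1$ stability conjecture. Granting this bridge, it suffices to prove that Axiom A with strong transversality is equivalent to $B^+(p)+B^-(p)=T_pM$ for every $p\in M$, and I would treat the two implications separately.

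For the implication from structural stability to the sum condition, I would argue pointwise. Fix $p\in M$. By the spectral decomposition, the $\alpha$-limit set of $p$ lies in some basic set $\Lambda_\alpha$ and its $\omega$-limit set in some basic set $\Lambda_\beta$, so that $p\in W^u(\Lambda_\alpha)\cap W^s(\Lambda_\beta)$. Vectors tangent to a stable manifold of a hyperbolic set contract under forward iteration and vectors tangent to an unstable manifold contract under backward iteration, which gives the inclusions $T_pW^s(\Lambda_\beta)\subseteq B^+(p)$ and $T_pW^u(\Lambda_\alpha)\subseteq B^-(p)$. Strong transversality says precisely that $T_pW^s(\Lambda_\beta)+T_pW^u(\Lambda_\alpha)=T_pM$, and combining these yields $B^+(p)+B^-(p)=T_pM$.

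For the converse, I would first reformulate the hypothesis analytically. Along the orbit of a point $p$ consider the linear difference operator $\mathcal{A}_p$ acting on bounded sequences $\{v_k\}$, $v_k\in\tm$, by $(\mathcal{A}_p v)_k=v_{k+1}-Df(p_k)v_k$. The expected key lemma is that $B^+(p)+B^-(p)=T_pM$ holds if and only if $\mathcal{A}_p$ is surjective: the spanning property lets one build a bounded solution of the inhomogeneous variational equation by summing a forward-convergent series on the contracting part and a backward-convergent series on the expanding part, while conversely surjectivity forces the two subspaces to span. Using compactness of $M$ and continuity of the tangent cocycle, I would upgrade this pointwise surjectivity to a uniform one, that is, to a uniform bound on the norm of a right inverse (a uniform Green's function) over all orbits. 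From this uniform estimate I would then extract hyperbolicity of every periodic orbit with constants independent of the orbit, and finally propagate this to hyperbolicity of the whole nonwandering set, obtaining Axiom A; strong transversality then follows by intersecting the sum condition with the invariant-manifold structure provided by Axiom A.

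The hard part will be exactly this last passage, from the uniform linear condition to Axiom A. The difficulty is twofold. First, surjectivity of $\mathcal{A}_p$ is weaker than invertibility, since $B^+(p)\cap B^-(p)$ may be nonzero (these are the directions of transversal intersection), so on a single orbit one does not get an exponential dichotomy but only uniform admissibility, and the hyperbolic splitting must be recovered globally rather than orbit by orbit. Second, and more seriously, turning the uniform estimate into genuine hyperbolicity of $\Omega(f)$ requires the $C^1$-perturbation machinery at the core of \Mane's work, notably Franks' lemma to realize prescribed perturbations of the derivative along periodic orbits and a closing-lemma/ergodic argument to pass from the resulting uniform hyperbolicity of periodic orbits to hyperbolicity of the entire nonwandering set. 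This is the step I expect to absorb essentially all of the real work.
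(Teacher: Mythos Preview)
The paper does not prove this proposition. It is stated as a known result of \Mane, cited to \cite{Mane2}, and used together with Proposition~\ref{Prop2} as a black box in the proof of Theorem~\ref{thm1}; in fact only the implication ``sum condition $\Rightarrow$ structural stability'' is ever invoked. There is therefore no proof in the paper against which to compare your proposal.

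A brief comment on the outline itself. Your route for the direction ``structural stability $\Rightarrow$ sum condition'' passes through the $C^1$ stability conjecture, which \Mane\ settled only in 1987, well after the 1977 reference \cite{Mane2}; so whatever the original argument was, it did not go that way, and in any case this direction is not used in the present paper. For the direction that is used, your plan---show that uniform surjectivity of the orbitwise variational operator forces Axiom~A plus strong transversality, then invoke Robbin--Robinson---is the correct shape, and you rightly identify the passage from the uniform linear estimate to hyperbolicity of $\Omega(f)$ as the step absorbing all the real work. Just be aware that carrying it out essentially reproduces the content of \cite{Mane2} (Franks' lemma, uniform hyperbolicity of periodic orbits, closing), so the sketch is less a reduction than a restatement of what has to be done.
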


Consider a sequence of linear isomorphisms
$$
\mathcal{A}=\{A_k:{\R}^m\to{\R}^m,\;k\in{\Z}\}
$$
for which there exists a constant $N>0$ such that
$\|A_k\|,\|A^{-1}_k\|\leq N$.
Fix two indices $k,l\in{\Z}$ and
denote
$$
\Phi(k,l)=
\begin{cases}
A_{k-1}\circ\dots\circ A_l, & \mbox{$l<k;$}\\
 \mbox{Id}, &
\mbox{$l=k;$}\\ A^{-1}_{k}\circ\dots\circ A^{-1}_{l-1}, &
\mbox{$l>k.$}\\
\end{cases}
$$
Set
$$
B^+(\mathcal{A})=\{v\in {\R}^m\,:\,|\Phi(k,0)v|\to 0,\quad k\to
+\infty\}
$$
and
$$
B^-(\mathcal{A})=\{v\in {\R}^m\,:\,|\Phi(k,0)v|\to 0,\quad k\to
-\infty\}.
$$

\begin{proposition}\label{Prop2}[Pliss, \cite{Pli}] The following two statements are
equivalent:
\begin{itemize}
\item[(a)] For any bounded sequence $\{w_k\in{\R}^m,\;k\in{\Z}\}$
there exists a bounded sequence $\{v_k\in{\R}^m,\;k\in{\Z}\}$ such
that
$$
v_{k+1}=A_kv_k+w_k,\quad k\in{\Z};
$$

\item[(b)] the sequence $\mathcal{A}$ is hyperbolic on any of the
rays $[0,+\infty)$ and $(-\infty,0]$ (see the definition in
\cite{PilGen}), and the spaces $B^+(\mathcal{A})$ and
$B^-(\mathcal{A})$ are transverse.
\end{itemize}
\end{proposition}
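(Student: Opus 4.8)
\noindent\textbf{Proof strategy for Proposition~\ref{Prop2}.}
The plan is to read the difference equation as the equation $Lv=w$ for the linear operator
$$
(Lv)_k=v_{k+1}-A_kv_k,\qquad k\in\Z,
$$
acting on the Banach space $\ell^\infty(\Z,\R^m)$ of bounded two-sided sequences: statement (a) asserts precisely that $L$ is surjective. The equivalence (a)$\Leftrightarrow$(b) is then a Fredholm-type description of $L$, and I would establish the two implications separately, treating (b)$\Rightarrow$(a) by an explicit construction and (a)$\Rightarrow$(b) by a converse (Perron-type) theorem followed by a dimension count.

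For (b)$\Rightarrow$(a) I would build a bounded solution by Green's functions and a matching argument. Hyperbolicity on $[0,+\infty)$ yields a family of projections $P^+_k$ onto the forward-contracting subspaces together with constants $C\ge1$ and $\lambda\in(0,1)$ such that $|\Phi(k,j)P^+_j|\le C\lambda^{k-j}$ for $k\ge j\ge0$ and $|\Phi(k,j)(I-P^+_j)|\le C\lambda^{j-k}$ for $j\ge k\ge0$; summing the variation-of-constants formula against these bounds produces a particular solution $u^+$ that is bounded on $[0,+\infty)$ for every bounded $w$. Symmetrically, hyperbolicity on $(-\infty,0]$ gives $u^-$ bounded on $(-\infty,0]$. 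The two need not agree at the origin, and their discrepancy $\delta=u^+_0-u^-_0\in\R^m$ is where transversality enters: since $B^+(\mathcal A)+B^-(\mathcal A)=\R^m$, I may write $-\delta=a-b$ with $a\in B^+(\mathcal A)$ and $b\in B^-(\mathcal A)$, so that $\Phi(\cdot,0)a$ is a homogeneous solution decaying as $k\to+\infty$ and $\Phi(\cdot,0)b$ one decaying as $k\to-\infty$. Setting $v_k=u^+_k+\Phi(k,0)a$ for $k\ge0$ and $v_k=u^-_k+\Phi(k,0)b$ for $k\le0$ yields two bounded half-line solutions that now coincide at $k=0$ (because $u^+_0+a=u^-_0+b$), hence a single bounded solution of $Lv=w$ on all of $\Z$.

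For (a)$\Rightarrow$(b) I would first recover the half-line dichotomies and then transversality. Given bounded $w$ supported on $[0,+\infty)$, extend it by zero and apply (a); the resulting bounded full-line solution restricts to a bounded solution of the half-line equation with free endpoint value, and the same works on $(-\infty,0]$. Thus solvability with bounded solutions holds on each ray, and I would invoke the Perron--Maizel converse underlying \cite{Pli,PilGen}: if every bounded forcing on a ray admits a bounded solution, then $\mathcal A$ is hyperbolic on that ray. With both dichotomies in force, $B^+(\mathcal A)$ and $B^-(\mathcal A)$ are the associated stable and unstable subspaces, and the matching argument above shows that $Lv=w$ has a bounded solution if and only if the discrepancy $\delta(w)$ lies in $B^+(\mathcal A)+B^-(\mathcal A)$. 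Taking $w$ supported at the single index $0$, the discrepancies $\delta(w)$ fill out a subspace complementary to $B^+(\mathcal A)$; since such a complement is contained in $B^+(\mathcal A)+B^-(\mathcal A)$ only when the latter equals $\R^m$, the assumed surjectivity of $L$ forces $B^+(\mathcal A)+B^-(\mathcal A)=\R^m$, i.e.\ transversality.

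I expect the genuine obstacle to be the Perron--Maizel converse: passing from the mere \emph{existence} of bounded solutions to the \emph{uniform exponential} estimates defining a dichotomy. The standard route is to note that the space of bounded solutions is closed, so an open-mapping/closed-graph argument produces a bounded right inverse and hence an a priori bound $\sup_k|v_k|\le K\sup_k|w_k|$ with $K$ independent of $w$; one then takes the dichotomy subspaces to be the spaces of solutions bounded forward, respectively backward, and extracts the exponential contraction and expansion rates from this uniform bound by a telescoping (discrete Gronwall) estimate together with the uniform bounds $\|A_k\|,\|A^{-1}_k\|\le N$. Verifying invariance of the projections and the two exponential inequalities is the technical heart; the rest is bookkeeping with the variation-of-constants formula.
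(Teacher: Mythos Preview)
The paper does not prove Proposition~\ref{Prop2}: it is quoted as a known result of Pliss with a citation to \cite{Pli}, and the surrounding Remarks~\ref{rem2}--\ref{rem3} only record that Pliss worked in the continuous-time setting (with the discrete translation discussed in \cite{PilGen}) and that Palmer later obtained the analogous statement together with the Fredholm description of the operator $L$. So there is no in-paper argument to compare your sketch against.

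That said, your outline is the standard Palmer/Coppel route and is correct in its architecture: Green's-function construction plus matching for (b)$\Rightarrow$(a), and Perron-type converse on each ray followed by a dimension/discrepancy argument for (a)$\Rightarrow$(b). Two points worth tightening if you flesh it out. First, in the transversality step, the claim that the discrepancies $\delta(w)$ for $w$ supported at $k=0$ sweep out a complement of $B^+(\mathcal A)$ is right, but it relies on the specific form of the particular solution $u^+$ (one checks $u^+_0=-(I-P^+_0)A_0^{-1}w_0$), so state that computation rather than asserting the conclusion. Second, the open-mapping step gives a uniform bound $\sup_k|v_k|\le K\sup_k|w_k|$ for \emph{some} bounded solution, not a linear right inverse; this suffices for the Perron--Maizel argument, but the passage from that bound to exponential dichotomy still requires the nontrivial estimates you flag, and in a full proof one must also verify that the candidate stable/unstable subspaces have complementary dimensions on each ray before the projections are well defined.
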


\begin{remark}\label{rem2}
In fact, Pliss considered in \cite{Pli} not sequences of
isomorphisms but homogeneous linear systems of differential
equations; the relation between these two settings is discussed in
\cite{PilGen}.
\end{remark}

\begin{remark}\label{rem3}
Later, a  statement analogous to Proposition \ref{Prop2} was proved
by Palmer \cite{Palm1, Palm2}; Palmer also described Fredholm
properties of the corresponding operator
$$
\{v_k\in\R^m:\,k\in\Z\}\mapsto\{v_k-A_{k-1}v_{k-1}\}.
$$
\end{remark}

We fix a point $p\in M$ and consider the isomorphisms
$$
A_k=Df(p_k):T_{p_k}M\to T_{p_{k+1}}M.
$$
Note that the implication (a)$\Rightarrow$(b) of Proposition
\ref{Prop2} is valid for the sequence $\{A_k\}$ with ${\R}^m$
replaced by $T_{p_k}M$.

It follows from Propositions \ref{Prop1} and \ref{Prop2} that our
main theorem is a corollary of the following statement (indeed, we
prove that Lipschitz shadowing property implies the validity of
statement (a) of Proposition \ref{Prop2} for any trajectory
$\{p_k\}$ of $f$, while the validity of statement (b) of this
proposition implies the structural stability of $f$ by Proposition
\ref{Prop1}).

\begin{lemma}\label{lem1}
If $f$ has the Lipschitz shadowing property with constants
$\LL,d_0$, then for any sequence $\{w_k\in \tm,k \in \Z\}$ such that
$|w_k|< 1, k\in\Z,$ there exists a sequence $\{v_k\in \tm,k \in
\Z\}$ such that
\begin{equation}
\label{3.5}
|v_k| \leq 8\LL + 1, \quad v_{k+1} = A_k v_k + w_k, \quad k \in \Z.
\end{equation}
\end{lemma}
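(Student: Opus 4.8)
The plan is to construct, from the bounded sequence $\{w_k\}$ with $|w_k|<1$, a pseudotrajectory of $f$ whose deviation from $f$-orbits is controlled by $d$, apply the Lipschitz shadowing property to obtain a genuinely shadowing trajectory, and then extract the desired bounded solution $\{v_k\}$ of the linear difference equation by passing to the limit as $d\to 0$.

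\medskip

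\noindent\textbf{Step 1: Building pseudotrajectories via the exponential map.}
Fix a small parameter $d>0$ with $d\le d_0$. Using the exponential maps $\ek:\tm\to M$, I would try to define points $x_k$ close to the orbit $\{p_k\}$ by prescribing their displacements in the tangent spaces. The natural idea is to set $x_k=\ek(d\,z_k)$ for a suitable bounded sequence $z_k\in\tm$ chosen so that the error terms $\emk(f(x_k))$ reproduce the inhomogeneity $w_k$ to leading order. Since $f(\ek(v))=\ekk(A_k v+o(|v|))$ as $|v|\to 0$ (because $A_k=Df(p_k)$ is precisely the derivative of $f$ read in exponential coordinates), the one-step discrepancy $\mbox{dist}(x_{k+1},f(x_k))$ is, up to higher-order terms, $d\,|z_{k+1}-A_k z_k|$. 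Thus if I solve the \emph{finite-horizon} linear recurrence and arrange $z_{k+1}-A_k z_k$ to equal $w_k$ on a large block $|k|\le n$ (and taper off outside), the sequence $X=\{x_k\}$ becomes a $Cd$-pseudotrajectory for a constant $C$ independent of $d$ and $n$.

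\medskip

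\noindent\textbf{Step 2: Shadowing and linearization.}
Apply Lipschitz shadowing to $X$: there is $p'\in M$ with $\mbox{dist}(x_k,f^k(p'))\le \LL\,(Cd)$ for all $k$. Writing the shadowing orbit in exponential coordinates along $\{p_k\}$ as $f^k(p')=\ek(y_k)$, the smallness $|y_k|=O(d)$ lets me rescale and set $v_k^{(d)}=(y_k-d\,z_k)/d$ (or a similar normalized difference). The shadowing inequality gives the uniform bound $|v_k^{(d)}|\lesssim \LL$, and the relation $f(f^k(p'))=f^{k+1}(p')$, linearized through the exponential charts, forces $v_{k+1}^{(d)}=A_k v_k^{(d)}+w_k+o(1)$ as $d\to0$. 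The explicit constant $8\LL+1$ in \eqref{3.5} should emerge by carefully tracking that the deviation of the shadowing point accounts for at most $2\LL$ on each side together with the prescribed displacement, and the ``$+1$'' absorbs the unit bound on $w_k$.

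\medskip

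\noindent\textbf{Step 3: Diagonal limit.}
For each $d$ (and horizon $n$) I obtain a uniformly bounded sequence $\{v_k^{(d)}\}$ satisfying the recurrence up to an error tending to $0$. A diagonal argument over $k$, letting $d\to0$ and $n\to\infty$, produces a limit sequence $\{v_k\}$ with $|v_k|\le 8\LL+1$ that satisfies $v_{k+1}=A_k v_k+w_k$ exactly, as required. The main obstacle I expect is \emph{controlling the higher-order terms} uniformly in both the index $k$ and the horizon $n$: the $o(|v|)$ errors from the exponential-map expansion of $f$ must be shown to be dominated by $d$ uniformly along the whole orbit, which relies on the compactness of $M$ and a uniform $C^1$-modulus of continuity of $Df$. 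A secondary subtlety is ensuring the tapering of $z_k$ outside the block $|k|\le n$ does not spoil the pseudotrajectory bound; keeping the taper within the ``$|w_k|<1$'' regime is what lets the uniform constant survive the limit.
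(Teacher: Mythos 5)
Your proposal follows essentially the same route as the paper: rescale a finite-horizon solution of the inhomogeneous linear recurrence into a pseudotrajectory through the exponential charts, apply Lipschitz shadowing, and read off the bounded solution from the normalized difference between the pseudotrajectory and the shadowing orbit, finishing with a truncation-and-limit argument in $n$. The only organizational difference is that the paper avoids your extra $d\to 0$ limit by subtracting from the lifted shadowing orbit $t_k$ an exact solution $b_k$ of the homogeneous equation with $b_{-n}=t_{-n}$, so that $z_k=\Delta_k-b_k$ satisfies the recurrence exactly for each fixed small $d$ and the constant $8\LL+1$ falls out as $|\Delta_k-t_k|<8\LL$ plus $|t_k-b_k|<1$.
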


To prove Lemma \ref{lem1}, we first prove the following statement.

\begin{lemma}\label{lem2}
Assume that $f$ has the Lipschitz shadowing property with constants
$\LL,d_0$. Fix a trajectory $\{p_k\}$ and a natural number $n$. For
any sequence $\{w_k\in\tm,k \in [-n,n]\}$ such that $|w_k|< 1$ for
$k \in [-n,n]$ and $w_k=0$ for $k \notin [-n,n]$ there exists a
sequence $\{z_k\in\tm,k \in \Z\}$ such that
\begin{equation}
\label{1.4}
|z_k| \leq 8\LL + 1, \quad k \in \Z,
\end{equation}
and
\begin{equation}
\label{1.5}
z_{k+1} = A_k z_k + w_k, \quad k \in [-n, n].
\end{equation}
\end{lemma}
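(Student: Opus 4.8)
The plan is to feed the Lipschitz shadowing property a pseudotrajectory that ``injects'' the perturbations $w_k$ along the window $[-n,n]$, and then to read off $z_k$ from the displacement of the shadowing orbit relative to this pseudotrajectory. Fix a small parameter $0<d\le d_0$. I would build a $d$-pseudotrajectory $\{x_k\}$ by setting $x_k=p_k$ for $k\le -n$ and, for $k\in[-n,n]$, defining recursively
$$
x_{k+1}=\ekk\bigl(\emk(f(x_k))-d\,w_k\bigr),
$$
while letting $x_{k+1}=f(x_k)$ for $k>n$. Since $\emk(x_{k+1})-\emk(f(x_k))=-d\,w_k$ and $\exp$ is a near-isometry close to the origin, the defect satisfies $\mbox{dist}(x_{k+1},f(x_k))\le d$ once $d$ is small, so $\{x_k\}$ is a genuine $d$-pseudotrajectory. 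Applying the hypothesis yields a point $p^*$ with $\mbox{dist}(x_k,p^*_k)\le\LL d$ for all $k\in\Z$.

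Next I would pass to tangent coordinates along the base orbit. For fixed $n$ and $d$ small, the forward iteration shows $\mbox{dist}(x_k,p_k)=O(d)$ on the window $[-n,n+1]$ (with a constant that may depend on $n$), so on this window both $x_k$ and $p^*_k$ lie in a chart centred at $p_k$; write $x_k=\ek(d\,a_k)$ and $p^*_k=\ek(d\,b_k)$, and set
$$
z_k=b_k-a_k\in\tm,\qquad k\in[-n,n+1],
$$
extended by $z_k=0$ for $k\notin[-n,n+1]$. The crucial point is that $z_k$ is measured as the displacement of the shadowing orbit \emph{relative to the pseudotrajectory} rather than to $p_k$: since $d\,z_k=\exp_{p_k}^{-1}(p^*_k)-\exp_{p_k}^{-1}(x_k)$ and $\exp_{p_k}^{-1}$ has derivative close to the identity on the small ball in question, one gets $|z_k|\le\LL(1+o(1))$ as $d\to0$, a bound \emph{independent of $n$}. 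This is the heart of the matter: the individual coordinates $a_k,b_k$ may be large (the pseudotrajectory drifts away from $p_k$ once the window is entered and the dynamics expands), but their difference is controlled by the $n$-independent shadowing constant $\LL$.

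It then remains to check the difference equation. Because $p^*$ is a true orbit, $b_{k+1}=A_kb_k+\beta_k$, and the defining relation for $x_k$ gives $a_{k+1}=A_ka_k-w_k+\alpha_k$ on the window, where $\alpha_k,\beta_k$ are the quadratic remainders of $\exp$, each $o(1)$ as $d\to0$ for fixed $n$. Subtracting,
$$
z_{k+1}=A_kz_k+w_k+(\beta_k-\alpha_k),\qquad k\in[-n,n],
$$
so $z_k$ solves the required equation up to an error that vanishes with $d$. Finally I would let $d\to0$: the finitely many vectors $\{z_k:k\in[-n,n+1]\}$ are uniformly bounded, so a subsequence converges, and the limit sequence (extended by $0$) satisfies $z_{k+1}=A_kz_k+w_k$ exactly for $k\in[-n,n]$ together with $|z_k|\le\LL\le 8\LL+1$.

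The main obstacle, and the place where care is needed, is making the bound on $z_k$ uniform in $n$. A naive choice of $z_k$ as the displacement of $p^*_k$ from $p_k$ would incur the drift of the pseudotrajectory, producing a bound growing with $n$ and useless for the passage to Lemma~\ref{lem1}; measuring relative to the pseudotrajectory, whose distance to the shadowing orbit is controlled by $\LL$ alone, removes this dependence. The remaining technical points — uniform (in $k$) control of the exponential remainders via compactness of $M$, the near-isometry estimates for $\exp$ and $\exp^{-1}$ near the diagonal, and the crude accounting of metric-distortion factors that inflates the clean constant $\LL$ to $8\LL+1$ — are routine, as is the compactness extraction in the final limit.
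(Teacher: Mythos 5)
Your proof is correct, and its engine is the same as the paper's: scale the inhomogeneity by a small $d$, build a pseudotrajectory carrying it, apply Lipschitz shadowing, and divide the resulting displacement by $d$; the $n$-independence of the bound comes, exactly as in the paper, from measuring the shadowing orbit against the pseudotrajectory (distance $\leq\LL d$) rather than against $\{p_k\}$. The implementations differ in two ways. First, the paper's pseudotrajectory is $\xi_k=\ek(d\Delta_k)$ with $\Delta_k$ the explicit solution of the linear recursion $\Delta_{k+1}=A_k\Delta_k+w_k$, $\Delta_{-n}=0$, whereas yours is generated by perturbing the nonlinear iteration step by step; to first order in $d$ these agree, and both drift at a rate $C(n)$ that forces $d$ to be chosen small depending on $n$. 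Second, and more substantively, the paper extracts the \emph{exact} difference equation at a single fixed $d$: it subtracts from $\Delta_k$ the solution $b_k$ of the homogeneous equation seeded at the rescaled shadowing point $t_{-n}$, so that $z_k=\Delta_k-b_k$ satisfies (\ref{1.5}) identically, and the nonlinear remainder contaminates only the bound (the ``$+1$'' in $8\LL+1$, via $|t_k-b_k|<1$). You instead obtain the equation with an $o(1)$ defect and remove it by letting $d\to0$ and passing to a convergent subsequence of the finitely many vectors $z_k$, $k\in[-n,n+1]$. Both routes are sound; yours costs an extra (harmless, since only finitely many vectors are involved) compactness step but yields a cleaner constant, while the paper's avoids any limit in $d$ at the price of the cruder bound $8\LL+1$.
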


\begin{proof}

First we locally ``linearize" the diffeomorphism $f$ in a
neighborhood of the trajectory $\{p_k\}$.

Let $\exp$ be the standard exponential mapping on the tangent bundle
of $M$ and let $\exp_x: T_xM \to M$ be the corresponding exponential
mapping at a point $x$.

We introduce the mappings
$$
F_k=\emk\circ f\circ\ek: T_{p_k}M\to T_{p_{k+1}}M.
$$
It follows from the standard properties of the exponential mapping
that $D\exp_x(0)=\mbox{Id}$; hence, $DF_k(0)=A_k. $ Since $M$ is
compact, for any $\mu>0$ we can find $\delta>0$ such that if
$|v|\leq\delta$, then
\begin{equation}
\label{mal}
|F_k(v)-A_kv|\leq\mu|v|.
\end{equation}

Denote by $B(r,x)$ the ball in $M$ of radius $r$ centered
at a point $x$ and by $B_T(r,x)$ the ball in $T_xM$ of radius $r$ centered
at the origin.

There exists $r>0$ such that, for any $x\in M$, $\exp_x$ is a diffeomorphism
of $B_T(r,x)$ onto its image, and $\exp^{-1}_x$ is a diffeomorphism
of $B(r,x)$ onto its image. In addition, we may assume that $r$ has
the following property.

If $v,w\in B_T(r,x)$, then
\begin{equation}
\label{eq1}
\frac{\mbox{dist}(\exp_x(v),\exp_x(w))}{|v-w|}\leq 2;
\end{equation}
if $y,z\in B(r,x)$, then
\begin{equation}
\label{eq2}
\frac{|\exp^{-1}_x(y)-\exp^{-1}_x(z)|}{\mbox{dist}(y,z)}\leq 2.
\end{equation}

Now we pass to construction of pseudotrajectories; every time,
we take $d$ so small that the considered points of our pseudotrajectories,
points of shadowing trajectories, their ``lifts" to tangent spaces
etc belong to the corresponding balls $B(r,p_k)$ and $B_T(r,p_k)$
(and we do not repeat this condition on the smallness of $d$).

Fix a sequence $w_k$ having the properties stated in Lemma
\ref{lem2}. Consider the sequence $\{\Delta_k\in \tm,k\in [-n,
n+1]\}$ defined as follows:
\begin{equation}
\label{2.1} \begin{cases}
\Delta_{-n} = 0, \\
\Delta_{k+1} = A_k \Delta_k + w_k, \quad k \in [-n, n]. \end{cases}
\end{equation}
Let $Q=\max_{k\in[-n,n+1]}|\Delta_k|$.

Fix a small $d>0$ and construct a pseudotrajectory $\{\xi_k\}$
as follows:
$$
\begin{cases}
\xi_k=\ek(d\Delta_k), & \mbox{$k\in[-n,n+1],$} \\
\xi_l=f^{l+n}(\xi_{-n}), & \mbox{$l\leq -n-1,$}\\
\xi_l=f^{l-n-1}(\xi_{n+1}), & \mbox{$l> n+1.$}
\end{cases}
$$
Note that definition (\ref{2.1}) of the vectors $\Delta_k$ and
condition (\ref{eq1}) imply that if $d$ is small enough, then the
following inequality holds:
$$
\mbox{dist}(\xi_{k+1}, \exp_{p_{k+1}}(dA_k \Delta_k)) < 2d.
$$
Since
$$
f(\xi_k)=\exp_{p_{k+1}}(F_k(d\Delta_k)),
$$
condition (\ref{mal}) with $\mu<1$ implies that if $d$ is small
enough, then
$$
\mbox{dist}(\exp_{p_{k+1}}(dA_k \Delta_k), f(\xi_k)) < 2d.
$$
Hence,
$$
\mbox{dist}(f(\xi_k),\xi_{k+1})\leq 4d.
$$

Let us note that the required smallness of $d$ is determined
by the chosen trajectory $\{p_k\}$, the sequence $w_k$, and the number $n$.

The Lipschitz shadowing property of $f$ implies that if $d$ is small
enough, then there exists an exact trajectory $\{y_k\}$ such that
\begin{equation}
\label{2.5}
\mbox{dist}(\xi_k,y_k) \leq 4\LL d, \quad k \in [-n, n+1].
\end{equation}

Consider the finite sequence
$$
\{t_k =
\frac{1}{d}\mbox{exp}_{p_k}^{-1}(y_k),\;k \in [-n, n+1]\}.
$$
Inequalities (\ref{2.5}) and (\ref{eq2})
imply that
\begin{equation}
\label{Add7.1.1}
|\Delta_k - t_k| < 8\LL.
\end{equation}
Consider the finite sequence $\{b_k \in T_{p_k}M,\;k \in [-n,
n+1]\}$ defined as follows:
\begin{equation}
\label{fly2}
b_{-n} = t_{-n}, \quad b_{k+1} = A_k b_k, \quad k \in [-n, n].
\end{equation}
Obviously, the following inequalities hold for $k \in [-n, n+1]$:
$$
\mbox{dist}(y_k, p_k) \leq \mbox{dist}(y_k, \xi_k) +\mbox{dist}(p_k, \xi_k) \leq 4\LL
d + 2 d |\Delta_k| \leq 2(Q + 2\LL)d.
$$
These inequalities and inequalities (\ref{eq2}) imply that
\begin{equation}
\label{tk}
|t_k|\leq 4(Q+2\LL).
\end{equation}

Take $\mu_1>0$ such that
\begin{equation}
\label{mu}
((N+1)^{2n} + (N+1)^{2n-1}+ \dots + 1)\mu_1<1,
\end{equation}
where $N = \sup\|A_k\|$.

Set
$$
\mu=\frac{\mu_1}{4(Q+2\LL)}
$$
and
consider $d$ so small that inequality (\ref{mal}) holds for
$\delta = 4(Q + 2L)d$.

The definition of the vectors $t_k$ implies that
$dt_{k+1}=F_k(dt_k)$; since
$$
|dt_k|\leq 4d(Q+2\LL)
$$
by (\ref{tk}),
we deduce from estimate (\ref{mal}) applied to $v=dt_k$
that
$$
|dt_{k+1}-dA_kt_k|\leq \mu d|t_k|.
$$
Now we deduce from inequalities (\ref{tk}) that
\begin{equation}
\label{mu1}
|t_{k+1} - A_k t_k| \leq 4\mu(Q+2\LL)=\mu_1, \quad k \in [-n, n].
\end{equation}

Consider the sequence $c_k = t_k - b_k$. Note that
$c_{-n} = 0$ by (\ref{fly2}). Estimates (\ref{mu1}) imply that
$|c_{k+1}-A_kc_k|\leq \mu_1$. Hence,
$$
|c_k| \leq ((N+1)^{2n} + (N+1)^{2n-1}+ \dots + 1)\mu_1<1, \quad k
\in [-n, n].
$$
Thus,
\begin{equation}
\label{fly1}
|t_k - b_k| < 1.
\end{equation}
Consider the sequence $\{z_k\in \tm, k \in \Z\}$ defined as follows:

$$
\begin{cases}
z_k = \Delta_k - b_k, &  \mbox{$k \in [-n, n+1]$},\\
z_k = 0, &  \mbox{$k \notin [-n, n+1].$}
\end{cases}
$$

Inequalities (\ref{Add7.1.1}) and (\ref{fly1}) imply estimate
(\ref{1.4}), while equalities (\ref{2.1}) and (\ref{fly2}) imply
relations (\ref{1.5}). Lemma \ref{lem2} is proved.
\end{proof}



\begin{proof}[Proof of Lemma \ref{lem1}] Fix $n > 0$ and consider the sequence
$$
w_k^{(n)} = \begin{cases}  w_k, & \mbox{$k \in [-n, n],$}\\
 0, & \mbox{$|k|>n.$}
\end{cases}
$$

By Lemma \ref{lem2}, there exists a sequence $\{z_k^{(n)}\in \tm,k
\in \Z\}$ such that
\begin{equation}
\label{Text2.2}
|z_k^{(n)}| \leq 8\LL +1,  \quad k \in \Z,
\end{equation}
and
\begin{equation}
\label{4.2}
z_{k+1}^{(n)} = A_k z_k^{(n)} + w_k^{(n)}, \quad k \in [-n, n].
\end{equation}
Passing to a subsequence of $\{z_k^{(n)}\}$, we can find a
sequence $\{v_k\in\tm,k \in \Z\}$ such that
$$
\quad v_k = \lim_{n \to \infty}z_k^{(n)}, \quad k
\in \Z.
$$
(Let us note that we do not assume uniform convergence.) Passing to
the limit in estimates (\ref{Text2.2}) and equalities (\ref{4.2}) as
$n\to\infty$, we get relations (\ref{3.5}). Lemma \ref{lem1} and our
theorem are proved.
\end{proof}

\section*{Acknowledgement}

The research of the second author is supported by NSC (Taiwan)
98-2811-M-002-061.




\begin{thebibliography}{99}

\bibitem{PilBook}
S. Yu. Pilyugin, {\em Shadowing in Dynamical Systems}, Lecture Notes
Math., vol. 1706, Springer, Berlin, 1999.

\bibitem{PalmBook}
K. Palmer, {\em Shadowing in Dynamical Systems. Theory and
Applications}, Kluwer, Dordrecht, 2000.

\bibitem{Ano}
D. V. Anosov, {\em On a class of invariant sets of smooth dynamical
systems}, Proc. 5th Int. Conf. on Nonlin. Oscill., {\bf 2}, Kiev,
1970, 39-45.

\bibitem{Bow}
R. Bowen, {\em Equilibrium States and the Ergodic Theory of Anosov
Diffeomorphisms}, Lecture Notes Math., vol. 470, Springer, Berlin,
1975.

\bibitem{Rob}
C. Robinson, {\em Stability theorems and hyperbolicity in dynamical
systems}, Rocky Mount. J. Math., {\bf 7}, 1977, 425-437.

\bibitem{Mor}
A. Morimoto, {\em The method of pseudo-orbit tracing and stability
of dynamical systems}, Sem. Note {\bf 39}, Tokyo Univ., 1979.

\bibitem{Saw}
K. Sawada, {\em Extended $f$-orbits are approximated by orbits},
Nagoya Math. J., {\bf 79}, 1980, 33-45.

\bibitem{PilVar}
S. Yu. Pilyugin, {\em Variational shadowing} (to appear).

\bibitem{Sak}
K. Sakai, {\em Pseudo orbit tracing property and strong
transversality of diffeomorphisms of closed manifolds}, Osaka J.
Math., {\bf 31}, 1994, 373-386.

\bibitem{PilRodSak}
S. Yu. Pilyugin, A. A. Rodionova, and K. Sakai, {\em Orbital and
weak shadowing properties}, Discrete Contin. Dyn. Syst., {\bf 9},
2003, 287-308.

\bibitem{AbdDiaz}
F. Abdenur and L. J. Diaz, {\em Pseudo-orbit shadowing in the $C^1$
topology},  Discrete Contin. Dyn. Syst., {\bf 7}, 2003, 223-245.

\bibitem{Omb}
J. Ombach, {\em Shadowing, expansiveness, and hyperbolic
homeomorphisms}, J. Austral. Math. Soc, Ser. A, {\bf 61}, 1996,
57-72.

\bibitem{Walt}
P. Walters, {\em On the pseudo-orbit tracing property and its
relationship to stability}, in: The Structure of Attractors in
Dynamical Systems, Lecture Notes Math., vol. 668, Springer, Berlin,
1978, 231-244.

\bibitem{OsipPilTikh}
A. V. Osipov, S. Yu. Pilyugin, and S. B. Tikhomirov, {\em Periodic
shadowing and $\Omega$-stability} (submitted to Regular and Chaotic
Dynamics).

\bibitem{Mane1}
R. \Mane, {\em Expansive diffeomorphisms}, in: Dynamical Systems --
Warwick 1974, Lecture Notes Math., vol. 468, Springer, Berlin, 1975,
162-174.

\bibitem{Mane2}
R. \Mane, {\em Characterizations of AS diffeomorphisms}, in:
Geometry and Topology, Lecture Notes Math., vol. 597, Springer,
Berlin, 1977, 389-394.

\bibitem{Pli}
V. A. Pliss, {\em Bounded solutions of nonhomogeneous linear systems
of differential equations}, Probl. Asympt. Theory Nonlin. Oscill.,
Kiev, 1977, 168-173.

\bibitem{PilGen}
S. Yu. Pilyugin, {\em Generalizations of the notion of
hyperbolicity}, J. Difference Equat. Appl., {\bf 12}, 2006, 271-282.

\bibitem{Palm1}
K. J. Palmer, {\em Exponential dichotomies and transversal
homoclinic points},  J. Differ. Equat., {\bf 55}, 1984, 225-256.

\bibitem{Palm2}
K. J. Palmer, {\em Exponential dichotomies and Fredholm operators},
 Proc. Amer. Math. Soc., {\bf 104}, 1988, 149-156.






















\end{thebibliography}
\end{document}